\begin{document}
\newtheorem{theo}{Theorem}
\newtheorem{exam}{Example}
\newtheorem{coro}{Corollary}
\newtheorem{defi}{Definition}
\newtheorem{prob}{Problem}
\newtheorem{lemm}{Lemma}
\newtheorem{prop}{Proposition}
\newtheorem{rem}{Remark}
\newtheorem{conj}{Conjecture}
\newtheorem{calc}{}


\def\gru{\mu} 
\def\pg{{ \sf S}}               
\def\TS{\mathlarger{\bf T}}                
\def\NB{{\mathlarger{\bf N}}}
\def\group{{\sf G}}
\def\NLL{{\rm NL}}   

\def\plc{{ Z_\infty}}    
\def\pola{{u}}      
\newcommand\licy[1]{{\mathbb P}^{#1}} 
\newcommand\aoc[1]{Z^{#1}}     
\def\HL{{\rm Ho}}     
\def\NLL{{\rm NL}}   

\def\Z{\mathbb{Z}}                   
\def\Q{\mathbb{Q}}                   
\def\C{\mathbb{C}}                   
\def\N{\mathbb{N}}                   
\def\uhp{{\mathbb H}}                
\def\A{\mathbb{A}}                   
\def\dR{{\rm dR}}                    
\def\F{{\cal F}}                     
\def\Sp{{\rm Sp}}                    
\def\Gm{\mathbb{G}_m}                 
\def\Ga{\mathbb{G}_a}                 
\def\Tr{{\rm Tr}}                      
\def\tr{{{\mathsf t}{\mathsf r}}}                 
\def\spec{{\rm Spec}}            
\def\ker{{\rm ker}}              
\def\GL{{\rm GL}}                
\def\ker{{\rm ker}}              
\def\coker{{\rm coker}}          
\def\im{{\rm Im}}               
\def\coim{{\rm Coim}}            
\def\p{{\sf  p}}
\def\U{{\cal U}}   

\def\weig{{\nu}}
\def\r{{ r}}                       
\def\k{{\sf k}}                     
\def\ring{{\sf R}}                   
\def\X{{\sf X}}                      
\def\Ua{{   L}}                      
\def\T{{\sf T}}                      
\def\asone{{\sf A}}                  

\def\Ts{{\sf S}}
\def\cmv{{\sf M}}                    
\def\BG{{\sf G}}                       
\def\podu{{\sf pd}}                   
\def\ped{{\sf U}}                    
\def\per{{\bf  P}}                   
\def\gm{{  A}}                    
\def\gma{{\sf  B}}                   
\def\ben{{\sf b}}                    

\def\Rav{{\mathfrak M }}                     
\def\Ram{{\mathfrak C}}                     
\def\Rap{{\mathfrak G}}                     

\def\mov{{\sf  m}}                    
\def\Yuk{{\sf C}}                     
\def\Ra{{\sf R}}                      
\def\hn{{ h}}                         
\def\cpe{{\sf C}}                     
\def\g{{\sf g}}                       
\def\t{{\sf t}}                       
\def\pedo{{\sf  \Pi}}                  

\def\Der{{\rm Der}}                   
\def\MMF{{\sf MF}}                    
\def\codim{{\rm codim}}                
\def\dim{{\rm    dim}}                
\def\Lie{{\rm Lie}}                   
\def\gg{{\mathfrak g}}                

\def\u{{\sf u}}                       

\def\imh{{  \Psi}}                 
\def\imc{{  \Phi }}                  
\def\stab{{\rm Stab }}               
\def\Vec{{\rm Vec}}                 
\def\prim{{\rm  0}}                  

\def\Fg{{\sf F}}     
\def\hol{{\rm hol}}  
\def\non{{\rm non}}  
\def\alg{{\rm alg}}  
\def\tra{{\rm tra}}  

\def\bcov{{\rm \O_\T}}       

\def\leaves{{\cal L}}        

\def\cat{{\cal A}}              
\def\im{{\rm Im}}               

\def\pn{{\sf p}}              
\def\Pic{{\rm Pic}}           
\def\free{{\rm free}}         
\def \NS{{\rm NS}}    
\def\tor{{\rm tor}}
\def\codmod{{\xi}}    

\def\GM{{\rm GM}}

\def\perr{{\sf q}}        
\def\perdo{{\cal K}}   
\def\sfl{{\mathrm F}} 
\def\sp{{\mathbb S}}  

\newcommand\diff[1]{\frac{d #1}{dz}} 
\def\End{{\rm End}}              

\def\sing{{\rm Sing}}            
\def\cha{{\rm char}}             
\def\Gal{{\rm Gal}}              
\def\jacob{{\rm jacob}}          
\def\tjurina{{\rm tjurina}}      
\newcommand\Pn[1]{\mathbb{P}^{#1}}   
\def\P{\mathbb{P}}
\def\Ff{\mathbb{F}}                  

\def\O{{\cal O}}                     

\def\ring{{\mathsf R}}                         
\def\R{\mathbb{R}}                   

\newcommand\ep[1]{e^{\frac{2\pi i}{#1}}}
\newcommand\HH[2]{H^{#2}(#1)}        
\def\Mat{{\rm Mat}}              
\newcommand{\mat}[4]{
     \begin{pmatrix}
            #1 & #2 \\
            #3 & #4
       \end{pmatrix}
    }                                
\newcommand{\matt}[2]{
     \begin{pmatrix}                 
            #1   \\
            #2
       \end{pmatrix}
    }
\def\cl{{\rm cl}}                

\def\hc{{\mathsf H}}                 
\def\Hb{{\cal H}}                    
\def\pese{{\sf P}}                  

\def\PP{\tilde{\cal P}}              
\def\K{{\mathbb K}}                  

\def\M{{\cal M}}
\def\RR{{\cal R}}
\newcommand\Hi[1]{\mathbb{P}^{#1}_\infty}
\def\pt{\mathbb{C}[t]}               
\def\gr{{\rm Gr}}                
\def\Im{{\rm Im}}                
\def\Re{{\rm Re}}                
\def\depth{{\rm depth}}
\newcommand\SL[2]{{\rm SL}(#1, #2)}    
\newcommand\PSL[2]{{\rm PSL}(#1, #2)}  
\def\Resi{{\rm Resi}}              

\def\L{{\cal L}}                     
\def\Aut{{\rm Aut}}              
\def\any{R}                          
\newcommand\ovl[1]{\overline{#1}}    

\newcommand\mf[2]{{M}^{#1}_{#2}}     
\newcommand\mfn[2]{{\tilde M}^{#1}_{#2}}     

\newcommand\bn[2]{\binom{#1}{#2}}    
\def\ja{{\rm j}}                 
\def\Sc{\mathsf{S}}                  
\newcommand\es[1]{g_{#1}}            
\newcommand\V{{\mathsf V}}           
\newcommand\WW{{\mathsf W}}          
\newcommand\Ss{{\cal O}}             
\def\rank{{\rm rank}}                
\def\Dif{{\cal D}}                   
\def\gcd{{\rm gcd}}                  
\def\zedi{{\rm ZD}}                  
\def\BM{{\mathsf H}}                 
\def\plf{{\sf pl}}                             
\def\sgn{{\rm sgn}}                      
\def\diag{{\rm diag}}                   
\def\hodge{{\rm Hodge}}
\def\HF{{ F}}                                
\def\WF{{ W}}                               
\def\HV{{\sf HV}}                                
\def\pol{{\rm pole}}                               
\def\bafi{{\sf r}}
\def\id{{\rm id}}                               
\def\gms{{\sf M}}                           
\def\Iso{{\rm Iso}}                           

\def\hl{{\rm L}}    
\def\imF{{\rm F}}
\def\imG{{\rm G}}

\def\disc{{\rm disc}}

\begin{center}
{\LARGE\bf  Integral Hodge conjecture for Fermat varieties
}
\\
\vspace{.25in} {\large {\sc Enzo Aljovin, Hossein Movasati, Roberto  Villaflor Loyola}}\footnote{
Instituto de Matem\'atica Pura e Aplicada, IMPA, Estrada Dona Castorina, 110, 22460-320, Rio de Janeiro, RJ, Brazil,
{\tt 
aljovin@impa.br, hossein@impa.br, rvilla@impa.br}}
\end{center}

\begin{abstract}
We describe an algorithm which verifies whether  linear algebraic cycles of the Fermat variety  
generate the lattice of Hodge cycles. A computer implementation of this
confirms the integral Hodge conjecture for quartic and quintic Fermat fourfolds. Our algorithm is based on computation of the list of elementary
divisors of both the lattice of linear algebraic cycles, and the lattice of Hodge cycles written in terms of  vanishing cycles, 
and observing that these two lists are the same. 
\end{abstract}

\section{Introduction}

The integral Hodge conjecture was formulated by Hodge in 1941 \cite{Hodge1952}, and it states that every integral Hodge cycle is algebraic.
In 1962, Atiyah and Hirzebruch \cite{ati62} showed that not every torsion cycle 
is algebraic, providing the first counterexamples to integral Hodge conjecture and restating it over rational numbers.  For a survey on rational Hodge conjecture, which  is still an open problem, see \cite{Lewis}. 
Even for many varieties for which  the Hodge conjecture is known to be true, the integral Hodge conjecture remains open. Koll\'ar in \cite{kol92} shows that  integral Hodge conjecture fails for very general threefold in $\mathbb{P}^4$ of high enough degree. Beside surfaces, integral Hodge conjecture is known to be true for cubic fourfolds, see \cite[Theorem 2.11]{voisin2013}. For improvements in Atiyah-Hirzebruch approach  see \cite{tot97} and \cite{SV2005}, see also \cite{CtV12} and \cite{Totaro2013} for counterexamples in the direction of Hassett-Tschinkel method \cite[Remarque 5.10]{CtV12}, which is an improvement of Koll\'ar's construction. In the present article we prove the integral Hodge conjecture for a class of Fermat varieties.

Let $n\in \N$ be an even number and
\begin{equation}
\label{27nov2015}
X=X^d_n\subset \Pn {n+1}:   \ \  x_0^{d}+x_1^{d}+\cdots+x_{n+1}^d=0, 
\end{equation}
be the Fermat variety of dimension $n$ and degree $d$.
It has the following well-known algebraic cycles of dimension $\frac{n}{2}$ which we call linear algebraic cycles:
\begin{equation}
\label{ref2}
 \licy{\frac{n}{2}}_{a,b}:  
\left\{
 \begin{array}{l}
 x_{b_0}-\zeta_{2d}^{1+2a_1}x_{b_1}=0,\\
 x_{b_2}-\zeta_{2d}^{1+2a_3} x_{b_3}=0,\\
 x_{b_4}-\zeta_{2d}^{1+2a_5} x_{b_5}=0,\\
 \vdots \\
 x_{b_n}-\zeta_{2d}^{1+2a_{n+1}} x_{b_{n+1}}=0,
 \end{array}
 \right. 
\end{equation}
where $\zeta_{2d}$ is a $2d$-primitive root of 
unity, $b$ is a permutation of $\{0,1,2,\ldots,n+1\}$ and  $0\leq a_i\leq d-1$ are integers.
In order to get distinct cycles we may further assume that $b_0=0$ and for $i$ an even number, $b_i$ is the smallest number in
$\{0,1,\ldots,n+1\}\backslash\{b_0,b_1,b_2,\ldots, b_{i-1}\}$. 
It is easy
to check that the number of such cycles is $$
N:=1\cdot 3\cdots (n-1)\cdot (n+1)\cdot  d^{\frac{n}{2}+1},
$$ 
(for $d=3, n=2$ this is the famous $27$ lines in a smooth cubic surface). In \S\ref{camacho-fgv} we introduce a lattice 
$V^d_n\subset H_n(X,\Z)$ such that if 
\begin{equation} 
\label{1nov2017}
d  \hbox{ is a prime number,   } \hbox{ or } d=4,    \hbox{ or }  \gcd(d, (n+1)!)=1
\end{equation}
then  by Poincar\'e duality $V^d_n\cong H^n(X,\Z)\cap H^{\frac{n}{2},\frac{n}{2}}$. We prove the following theorem:
\begin{theo}
\label{11oct2017}
For $(n,d)$ in Table \ref{elementary-divs} we have: 
\begin{enumerate}
\item
The topological classes of $\licy{\frac{n}{2}}_{a,b}$ generate the lattice  
$V^d_n\subset H_n(X,\Z)$ and so for those $(n,d)$ with the property \eqref{1nov2017} 
the integral Hodge conjecture is valid (this includes the cases $(n,d)=(4,4),(4,5)$). 
\item
The elementary  divisors and the discriminant  of such a lattice are listed in the same table.
\end{enumerate}
\end{theo}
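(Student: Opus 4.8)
Both parts of the theorem record the output of a finite computation, so the plan is to describe that computation and then explain why its output yields the two claims at once. Write $L\subseteq H_n(X,\Z)$ for the subgroup generated by the topological classes of the $N$ linear algebraic cycles $\licy{\frac{n}{2}}_{a,b}$ of \eqref{ref2}. Since each $\licy{\frac{n}{2}}_{a,b}$ is an algebraic cycle of dimension $\frac{n}{2}$, its class is a Hodge class, and by the construction of $V^d_n$ in \S\ref{camacho-fgv} every such class lies in $V^d_n$; hence $L\subseteq V^d_n$. Consequently the generation statement in part (1) is equivalent to the equality $L=V^d_n$, and once this is known, the integral Hodge conjecture assertion of part (1) follows for those $(n,d)$ satisfying \eqref{1nov2017} from the isomorphism $V^d_n\cong H^n(X,\Z)\cap H^{\frac{n}{2},\frac{n}{2}}$, while part (2) is obtained by reading off the elementary divisors and discriminant of the common lattice $L=V^d_n$.

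To prove $L=V^d_n$ I would work in the explicit model of $H_n(X,\Z)$ furnished by the Pham basis of vanishing cycles, in which the intersection form is a known combinatorial matrix. First, using the closed formulas for the coordinates of the classes $[\licy{\frac{n}{2}}_{a,b}]$ in this basis and for their mutual intersection numbers on the Fermat variety, form the matrix whose rows are these coordinate vectors; its row span is $L$, and pairing a $\Z$-basis of that span through the intersection matrix produces the Gram matrix $G_L$ of $L$. Second, realize $V^d_n$ — the lattice of Hodge cycles in vanishing-cycle coordinates, cut out inside $H_n(X,\Z)$ by the vanishing of the periods of the holomorphic forms not of type $(\frac{n}{2},\frac{n}{2})$ — by solving this system over $\Z$, obtaining a $\Z$-basis of $V^d_n$ and its Gram matrix $G_V$. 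Third, compute the Smith normal forms of $G_L$ and of $G_V$ and compare the two lists of elementary divisors. If they agree, then in particular $L$ and $V^d_n$ have the same rank, so $L$ has finite index in $V^d_n$ and $\det G_L=[V^d_n:L]^2\,\det G_V$; since the absolute value of the determinant of an integer symmetric matrix equals the product of its elementary divisors, equality of the two lists forces $[V^d_n:L]=1$, that is $L=V^d_n$. For the $(n,d)$ in Table \ref{elementary-divs} satisfying \eqref{1nov2017} — in particular $(4,4)$ and $(4,5)$ — this gives that $H^n(X,\Z)\cap H^{\frac{n}{2},\frac{n}{2}}$ is generated by classes of linear algebraic cycles, which is the integral Hodge conjecture for $X$; and Table \ref{elementary-divs} lists exactly the elementary divisors and discriminant of $L=V^d_n$ just computed.

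I expect the real difficulty to be computational rather than conceptual. The index set of vanishing cycles and the number $N=1\cdot 3\cdots(n+1)\cdot d^{\frac{n}{2}+1}$ of linear cycles both grow rapidly — already $N=1875$ for $(n,d)=(4,5)$, against a large fourth primitive Betti number — so a direct Smith normal form over $\Z$ of the matrices involved is out of reach without preparation. The key to making the algorithm terminate in these cases is to exploit the large symmetry of the Fermat variety, namely the geometric automorphisms (the diagonal torus action modulo scalars together with permutations of the coordinates) and the Galois action of $\Gal(\Q(\zeta_d)/\Q)$ on the cycles, both to enumerate the linear cycles and their classes economically and to reduce the sizes of the matrices before computing invariant factors; and all arithmetic must be kept exact, since the whole argument rests on an exact equality of integer discriminants. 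One point of bookkeeping to monitor is that $V^d_n$ contains the class of a linear section of $X$: that this class is a $\Z$-combination of the $[\licy{\frac{n}{2}}_{a,b}]$ is not evident a priori, but it is automatically part of what the computed equality $L=V^d_n$ establishes.
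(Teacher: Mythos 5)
Your core mechanism is the same as the paper's: compute the Gram matrices of the lattice spanned by the linear cycles and of the lattice of Hodge cycles (the latter via the period conditions in the Pham basis and the known intersection matrix $\Psi$), take Smith normal forms, and use $\det G_L=[V^d_n:L]^2\det G_V$ to force index one; part (2) is then read off. The genuine gap is in your ambient model. The Pham vanishing cycles are \emph{not} a basis of $H_n(X,\Z)$: they are a basis of $H_n$ of the affine chart $x_0\neq 0$, whose image in $H_n(X,\Z)$ is only the primitive homology. Neither the classes $\licy{\frac{n}{2}}_{a,b}$ (each meets $\plc$ in one point) nor the class of a linear section lies in that span, so the computation you describe can only compare the primitive lattices: the span of the differences $\check P_i=P_i-P_1$ against $(V^d_n)_\prim$. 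To obtain the theorem as stated you still need the step $H_n(X,\Z)=H_n(X,\Z)_\prim+\Z P_1$ together with $P_1\in V^d_n$ (so any $\delta\in V^d_n$ is $(\delta\cdot\plc)P_1$ plus a primitive Hodge cycle, hence lies in $L$ once $L_\prim=(V^d_n)_\prim$), and, for part (2), a \emph{separate} computation of the elementary divisors of the full lattice $V^d_n$: these are not the divisors of its primitive part (the paper records only the a posteriori relation ``drop two copies of $1$ and append $d$'' and states it found no direct argument linking the two lists). Your closing remark about the linear section does not fill this, since the equality your computation can establish lives entirely inside primitive homology.

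Two further points where your route diverges from, and is weaker than, the paper's. First, you propose to get $G_L$ from ``closed formulas for the coordinates'' of the linear cycles in the Pham basis; no such formulas are available in the paper (expressing linear cycles through vanishing cycles is a nontrivial separate problem, essentially the Degtyarev--Shimada route). The paper instead computes $[\check P_i\cdot\check P_j]$ directly from the adjunction-type formula \eqref{13jan16-2}, which depends only on $\dim(P_i\cap P_j)$, and then uses the Hodge index theorem to argue that the radical of this degenerate $(N-1)\times(N-1)$ Gram matrix maps to zero in $H_n(X,\Z)$, so its nonzero elementary divisors are exactly those of the sublattice generated by the $\check P_i$ --- no coordinates needed. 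Second, the inclusion $L\subseteq V^d_n$ is not ``by construction'': vanishing of the periods of the forms indexed by $I_2$ (which are of type $(\frac{n}{2},\frac{n}{2})$) on linear cycles is a theorem (Aoki, or the integration formula of Movasati--Villaflor), and this input is what licenses the finite-index comparison in the first place; it should be cited, not asserted.
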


\begin{table}[ht!]
\centering
\begin{tabular}{lll}
\hline
\multicolumn{1}{ | l | }{$(n, d)$}			& \multicolumn{1}{l | }{Elementary divisors/discriminant}	&	 \multicolumn{1}{l | }{Rank}  \\			\hline
\multicolumn{1}{ | l | }{$(2,3)$}			& \multicolumn{1}{l | }{$+1^{7}$} & \multicolumn{1}{l | }{7}  \\ \hline
\multicolumn{1}{ | l | }{$(2,4)$}			& \multicolumn{1}{l | }{$-1^{18} \cdot 8^{2}$} & \multicolumn{1}{l | }{20} \\ \hline
\multicolumn{1}{ | l | }{$(2,5)$}			& \multicolumn{1}{l | }{$+1^{26} \cdot 5^{10} \cdot 25^{1}$} & \multicolumn{1}{l | }{37} \\ \hline
\multicolumn{1}{ | l | }{$(2,6)$}			& \multicolumn{1}{l | }{$+1^{37} \cdot 3^{13} \cdot 12^{8} \cdot 36^{3} \cdot 108^{1}$} &\multicolumn{1}{l | }{62}  \\ \hline
\multicolumn{1}{ | l | }{$(2,7)$}			& \multicolumn{1}{l | }{$-1^{48} \cdot 7^{38} \cdot 49^{5}$} &\multicolumn{1}{l | }{91}  \\ \hline
\multicolumn{1}{ | l | }{$(2,8)$}			& \multicolumn{1}{l | }{$-1^{54} \cdot 2^{12} \cdot 8^{48} \cdot 16^{2} \cdot 32^{8} \cdot 64^{4}$} &\multicolumn{1}{l | }{128}  \\ \hline
\multicolumn{1}{ | l | }{$(2,9)$}			& \multicolumn{1}{l | }{$-1^{72} \cdot 3^{7} \cdot 9^{72} \cdot 27^{7} \cdot 81^{11}$} &\multicolumn{1}{l | }{169}  \\ \hline
\multicolumn{1}{ | l | }{$(2,10)$}			& \multicolumn{1}{l | }{$-1^{80} \cdot 5^{18} \cdot 10^{96} \cdot 20^{13} \cdot 100^{10} \cdot 500^{1}$} & \multicolumn{1}{l | }{218} \\ \hline
\multicolumn{1}{ | l | }{$(2,11)$}			& \multicolumn{1}{l | }{$+1^{96} \cdot 11^{158} \cdot 121^{17}$} & \multicolumn{1}{l | }{271} \\ \hline
\multicolumn{1}{ | l | }{$(2,12)$}			& \multicolumn{1}{l | }{$+1^{109} \cdot 3^{5} \cdot 12^{192} \cdot 24^{2} \cdot 48^{2} \cdot 144^{21} \cdot 432^{1}$}& \multicolumn{1}{l | }{332} \\ \hline
\multicolumn{1}{ | l | }{$(2,13)$}			& \multicolumn{1}{l | }{$+1^{120} \cdot 13^{254} \cdot 169^{23}$} &\multicolumn{1}{l | }{397}  \\ \hline
\multicolumn{1}{ | l | }{$(2,14)$}			& \multicolumn{1}{l | }{$-1^{126} \cdot 7^{20} \cdot 14^{288} \cdot 28^{15} \cdot 196^{20} \cdot 1372^{1}$} & \multicolumn{1}{l | }{470} \\ \hline
\multicolumn{1}{ | l | }{$(4,3)$}			& \multicolumn{1}{l | }{$+1^{19} \cdot 3^{1} \cdot 9^{1}$} & \multicolumn{1}{l | }{21} \\ \hline
\multicolumn{1}{ | l | }{$(4,4)$}			& \multicolumn{1}{l | }{$+1^{100} \cdot 2^{2} \cdot 4^{4} \cdot 8^{30} \cdot 16^{4} \cdot 32^{2}$} &\multicolumn{1}{l | }{142}  \\ \hline
\multicolumn{1}{ | l | }{$(4,5)$}			& \multicolumn{1}{l | }{$+1^{166} \cdot 5^{174} \cdot 25^{54} \cdot 125^{7}$} & \multicolumn{1}{l | }{401} \\ \hline
\multicolumn{1}{ | l | }{$(4,6)$}			& \multicolumn{1}{l | }{$-1^{310} \cdot 3^{272} \cdot 6^{40} \cdot 12^{163} \cdot 36^{116} \cdot 108^{61} \cdot 216^{24} \cdot 648^{16}$} & \multicolumn{1}{l | }{1002} \\ \hline
\end{tabular}
\caption{Elementary divisors of the lattice of linear Hodge cycles for $X^d_n$}
\label{elementary-divs}
\end{table}

The cases in the first part of Theorem \ref{11oct2017} for Fermat surfaces ($n=2$) are  particular instances of 
a general theorem proved in \cite{Schuett2010} and \cite{Degtyarev2015} for $d=4$ or ${\rm gcd}(d,6)=1$. 
The integral Hodge conjecture for cubic 
fourfolds is proved in \cite{voisin2013}, see Theorem 2.11 and the comments thereafter. Nevertheless, note that Theorem \ref{11oct2017} in this case does not follow from this. 
The rational Hodge
conjecture in all the cases covered in our theorem is well-known, see \cite{sh79}. 
The limitation for $(n,d)$ in our main theorem is due to the fact that its proof
uses the computation of Smith normal form of integer matrices, for further discussion 
on this see \S\ref{16oct2017swap}.

After the first draft of this article was written,  A. Degtyarev informed us 
about his joint paper \cite{Degtyarev2016} with I. Shimada in which the authors give a criterion for primitivity of the sublattice of  $H_n(X,\Z)$ generated
by linear algebraic cycles. They also use a computer to check the primitivity in many cases of $(n,d)$ in our table, see \cite[Section 5]{Degtyarev2016}. 
This together with Shioda's result implies
the first statement in Theorem \ref{11oct2017}. The verification of this mathematical statement by computer, using two different methods and 
different people, bring more trust on such a statement even without a theoretical proof. 
Their method is based on computation of the intersection number of a linear algebraic cycle 
with a vanishing cycle (\cite[Theorem 2.2]{Degtyarev2016}), 
whereas our method is based on the computation of both intersection matrices of vanishing cycles, see \eqref{27.1.2017},  
and linear algebraic cycles, see \eqref{13jan16-2}.   The advantage of our method is the computation of  elementary divisors and the discriminant of the lattice of 
Hodge cycles. 

In Table \ref{elementary-divs}, $a^b\cdot c^d\cdots$ means $a$, $b$ times, followed by $c$, $d$ times etc. Interpreting this as usual multiplication
we get the discriminant of the lattice of Hodge cycles.
The discriminant is an invariant of a lattice and it is easy to see that the list of elementary divisors are also, see \cite[Theorem 2.4.12]{Cohen1993}. 
We have computed such elementary divisors from two completely different bases of $V^d_n$. The first one is computed using the intersection matrix of linear algebraic cycles and the other from the 
intersection matrix of Hodge cycles written in terms of vanishing cycles. 
We can give an interpretation of the power of $1$ in the list of elementary divisors, see \S\ref{1.11.17}. For instance,  the lattice 
$V^6_4$ modulo $3$ (resp. $2$) is of discriminant zero and its largest sub lattice of non-zero discriminant has rank $310$ (resp. $310+272$).
In all the cases in Table \ref{elementary-divs}, for primes $p$ such that $p$ does not divide $d$, the lattice $V^d_n$ modulo $p$ has non-zero discriminant.  
Finally,  it is worth mentioning that I. Shimada  in \cite[Theorem 1.5 and 1.6]{Shimada2001}  for the case $(n,d)=(4,3)$ and in characteristic $2$ proves that the lattice 
spanned by linear algebraic cycles is isomorphic to the laminated lattice of rank $22$ . He also computes the discriminant of such a lattice in
the case $(n,d)=(6,3)$. For an explicit basis of $V^4_2\cong H^2(X^4_2,\Z)\cap H^{1,1}$ using linear algebraic cycles  and the corresponding 
intersection matrix see \cite[Table 3.1]{ShimadaShioda2017}.


\section{Preliminaries on lattices}
\subsection{Smith normal form:}
Let $\tt A$ be a $\mu\times \check\mu$ matrix with integer coefficients. 
There exist  $\mu\times \mu$  and $\check\mu\times\check\mu$ integer-valued matrices $\tt U$ and  $\tt T$, respectively,  
such that 
$\det(\tt U)=\pm 1$, $\det(\tt T)=\pm 1$ and 
\begin{equation}
\label{13oct2017}
\tt U\cdot A \cdot T=\tt S
\end{equation}
where $\tt S$ is the diagonal matrix $\diag(a_1,a_2,\ldots,a_m,0,\cdots,0)$. 
%
Here, $m$ is the rank of the matrix $\tt A$ and  the natural numbers $a_i\in\N$ satisfy 
$a_i \mid a_{i+1}$ for all $i < m$ and are called the elementary divisors.  The matrix $\tt S$ is called the Smith normal form of $\tt A$. 
The equality \eqref{13oct2017} is also called the Smith decomposition of ${\tt A}$. 

\subsection{Elementary divisors of a Lattice:}
A lattice $V$ is a free finitely generated $\Z$-module equipped with a symmetric bilinear form $V\times V\to \Z$ which we also call the intersection bilinear map. 
We usually  fix a basis $v_1,v_2,\ldots,v_\mu$ of $V$. Its discriminant (resp. elementary divisors) is the determinant (resp. elementary divisors) of the intersection 
matrix  $[v_i\cdot v_j]$. The discriminant does not depend 
on the choice of the basis $v_i$, neither do the elementary divisors. 
Let $V$ be a lattice, possibly of discriminant zero. Let also  $V^\perp$ be the 
sublattice of $V$ containing 
all elements $v\in V$ such that $v\cdot V=0$.  The quotient $\Z$-module  $V/V^{\perp}$  is free and it inherits a bilinear form from $V$, and so it is a lattice.
Therefore, by definition $\rank(V/V^{\perp})\leq \rank(V)$ and $\disc(V/V^{\perp})\not=0$. 
Let $V$ be a lattice and its intersection matrix ${\tt A}=[v_i\cdot v_j]$ in 
a basis $v_i$ of $V$. 
In order to find  a basis, discriminant, and elementary divisors of $V/V^{\perp}$  we use the Smith normal form of ${\tt A}$.
We make the change
of basis $[w_i]_{\mu\times 1}:={\tt U}[v_i]_{\mu\times 1}$ and the intersection matrix is now 
$[w_i\cdot w_j]={\tt U}\cdot {\tt A}\cdot {\tt U}^{\tr}=\tt S\cdot \tt T^{-1}\cdot {\tt U}^{\tr}$. Therefore, using Smith normal form of matrices
we can always find a basis $w_i$ of $V$ with
\begin{equation}
\label{1.n.2017}
 [w_i\cdot w_j]=\tt S\cdot \tt R,
\end{equation}
where $\tt R$ is  $\mu\times \mu$  integer-valued matrix with $\det(\tt R)=\pm 1$ and $\tt S$ is a diagonal matrix as in \eqref{13oct2017}. 
Since $[w_i\cdot w_j]$ is a symmetric matrix, it follows that  $\tt R$ is a block diagonal matrix with two blocks of size $m\times m$ and $(\mu-m)\times(\mu-m)$ in the diagonal. 
Both blocks have determinant $\pm 1$ because $\det({\tt R})=\pm 1$. 
A basis of $V^\perp$  (resp. $V/V^{\perp}$) is given by $w_i,\ \ i=m+1,\cdots,\mu$ (resp. $i=1,2,\cdots, m$). It follows that the 
elementary divisors of $V/V^{\perp}$ in this basis are  $a_1,a_2,\cdots,a_m$. 

\subsection{Lattices mod primes}
\label{1.11.17}
For a lattice $V$ let $V_p:=V/pV$  which is a  $\Ff_p$-vector
space with the induced  $\Ff_p$-bilinear map $V_p\times V_p\to\Ff_p$. In a similar way as before we define  $V_p^\perp$. 
\begin{prop}
\label{2nov2017}
 Let $a_1,a_2,\ldots,a_m$ be the elementary divisors of the lattice  $V$. For a prime number such that $p\mid a_{s+1}$ but $p\not | a_j$ for all 
 $j\leq s$, the number $s$ is the rank of $V_p/V_p^\perp$. 
\end{prop}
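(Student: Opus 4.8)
The plan is to reduce everything to the Smith normal form already computed in \eqref{1.n.2017} and then count ranks over $\Ff_p$. Fix a basis $v_1,\ldots,v_\mu$ of $V$ with intersection matrix ${\tt A}=[v_i\cdot v_j]$, and let ${\tt U}\cdot{\tt A}\cdot{\tt U}^{\tr}=\tt S\cdot\tt R$ as in \eqref{1.n.2017}, where $\tt S=\diag(a_1,\ldots,a_m,0,\ldots,0)$ with $a_1\mid a_2\mid\cdots\mid a_m$ the elementary divisors of ${\tt A}$ (hence of the lattice $V$), and $\det(\tt R)=\pm 1$. Changing basis by ${\tt U}$ does not change the lattice $V$, nor does reduction mod $p$ change the property of being a basis; so after this change of basis the bilinear form on $V_p=V/pV$ is represented by the matrix $(\tt S\cdot\tt R)\bmod p$, whose rank over $\Ff_p$ equals $\rank(V_p/V_p^\perp)$ since $V_p^\perp$ is exactly the radical of the form on $V_p$ and $\rank(V_p)-\dim V_p^\perp=\rank$ of the Gram matrix.

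The key step is then the rank computation over $\Ff_p$. I would argue that $\rank_{\Ff_p}(\tt S\cdot\tt R)=\rank_{\Ff_p}(\bar{\tt S})$, where $\bar{\tt S}=\tt S\bmod p$: indeed $\tt R$ is invertible over $\Z$, so $\bar{\tt R}$ is invertible over $\Ff_p$, and right-multiplication by an invertible matrix preserves rank. Now $\bar{\tt S}=\diag(\bar a_1,\ldots,\bar a_m,0,\ldots,0)$ over $\Ff_p$, and a diagonal entry $\bar a_i$ is nonzero in $\Ff_p$ precisely when $p\nmid a_i$. Under the hypothesis that $p\mid a_{s+1}$ while $p\nmid a_j$ for all $j\leq s$, the divisibility chain $a_1\mid a_2\mid\cdots$ forces $p\mid a_i$ for every $i\geq s+1$ as well; hence exactly the first $s$ diagonal entries survive mod $p$, giving $\rank_{\Ff_p}(\bar{\tt S})=s$. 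Combining, $\rank(V_p/V_p^\perp)=s$, which is the assertion.

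A minor point to address carefully is that $\tt S\cdot\tt R$ need not itself be diagonal, so one cannot read the $\Ff_p$-rank directly off its diagonal; this is precisely why the argument routes through the invertibility of $\tt R$ modulo $p$ rather than through $\bar{\tt S}\cdot\bar{\tt R}$ being in any normal form. One should also note (as the excerpt already does) that $\tt R$ is block-diagonal with an $m\times m$ block and a $(\mu-m)\times(\mu-m)$ block, each of determinant $\pm 1$, which makes the symmetry of $\tt S\cdot\tt R$ manifest but is not strictly needed for the rank count. I do not foresee a genuine obstacle here: the only thing to get right is the bookkeeping identifying $V_p^\perp$ with the radical of the reduced form and noting that the elementary divisors of $V$ are by definition those of any Gram matrix, so that passing from ${\tt A}$ to $\tt U{\tt A}{\tt U}^{\tr}$ is harmless. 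The whole statement is essentially the mod-$p$ shadow of the computation in the preceding subsection.
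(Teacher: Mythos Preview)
Your argument is correct and is exactly the approach the paper takes: its proof is the single sentence ``This follows from the intersection matrix of $V$ with respect to the basis $w_i$ of $V$ in \eqref{1.n.2017},'' and you have simply spelled out the details behind that sentence. The invertibility of $\tt R$ mod $p$ and the divisibility chain $a_1\mid a_2\mid\cdots$ are precisely the points the paper leaves implicit.
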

\begin{proof}
This follows from the intersection matrix of $V$ with respect to the basis $w_i$ of $V$ in  \eqref{1.n.2017}.  
\end{proof}

\section{Fermat variety}
\label{camacho-fgv}
Let $X\subseteq \P^{n+1}$ be the Fermat variety of degree $d$ and even dimension $n$. In $H_n(X,\Z)$ we have the algebraic cycle $[\plc]$, where $\plc$ is the intersection of a linear projective space $\P^{\frac{n}{2}+1}$ with $X$. Every algebraic subvariety $Z\subseteq X$ of codimension $\frac{n}{2}$ induces an algebraic cycle $[Z]\in H_n(X,\Z)$. By abuse of notation, we will also denote the algebraic cycle by $Z$ instead of $[Z]$, letting it be understood in the context which one we are referring to. 
A cycle $\delta\in H_n(X,\Z)$ is called primitive if its intersection with $\plc$ is zero. For any subset $V\subset H_n(X,\Z)$
we denote by $V_\prim\subset V$ the set of primitive elements of $V$. 
Let $v_1,v_2,\cdots, v_\mu$ be any collection of primitive 
cycles (possibly linearly dependent in $H_n(X,\Z)$) and let $V$ be the free $\Z$-module generated by $v_i$'s. It has a natural lattice structure induced by the intersection of $v_i$'s in $H_n(X,\Z)$. From the Hodge index theorem it follows that
$V^\perp$ maps to zero in $H_n(X,\Z)$ and $V/V^\perp$ is a sublattice of $H_n(X,\Z)$. We will use this fact twice, for linear algebraic cycles and for 
Hodge cycles written in terms of vanishing cycles. 

\subsection{The lattice of linear algebraic cycles}
Let us index arbitrarily the linear subspaces defined in \eqref{ref2} as $P_{i},\ \ i=1,2,\ldots,N$.  A 
linear algebraic cycle $P_i$ intersects $\plc$ transversally in a point, and so  
$P_i\cdot\plc=1$. Moreover, 
one can use the adjunction formula and show 
\begin{equation}
 \label{13jan16-2}
P_i\cdot P_j=
\frac{1-(-d+1)^{m+1} }{d}
\end{equation}
where $\dim(P_i\cap P_j)=m$,  see \cite[Section 17.6]{ho13}.
The primitive part of the sublattice of $H_n(X,\Z)$ generated by $P_i$,
is generated by 
\begin{equation}
\label{15sept2017}
\check{P}_i:=P_i-P_1,\ \ i=2,\ldots,N. 
\end{equation}
The linear subspace $\P^{\frac{n}{2}+1}$ can be chosen as $\{x_0-\zeta_{2d}x_1=\cdots=x_{n}-\zeta_{2d}x_{n+1}=0\}$, so that the polarization cycle $\plc$ splits as a sum of $d$ linear algebraic cycles $P_i$ which intersect each other in a 
linear $\P ^{\frac{n}{2}-1}\subset \P ^{n+1}$. Therefore, the primitive cycle $dP_i-\plc$ is a $\Z$-linear 
combination of \eqref{15sept2017}. For the proof of Theorem \ref{11oct2017} we first write the Smith normal form of the $(N-1)\times (N-1)$ 
matrix
\begin{equation}
 \label{11.10.2017}
 {\tt A}_1:=[\check{P}_i\cdot  \check{P}_j].
\end{equation}

\subsection{The lattice of primitive Hodge cycles} \label{primitive_Hodge_cycles}
For all missing proofs and references in this paragraph see \cite[Chapter 15,16]{ho13}. 
Let $\Ua$ be the affine chart of the Fermat variety $X$ given by $x_0\not=0$. A topological cycle $\delta\in H_n(X,\Z)$ is primitive
if and only if it is supported in $\Ua$.  We also know that the homology group $H_n(\Ua,\Z)$ is free of rank 
$\mu:=(d-1)^{n+1}$ and we can choose a basis 
$$
\delta_{\beta},\ \beta\in I:=\{0,1,2,\ldots,d-2\}^{n+1}
$$ of $H_n(\Ua,\Z)$ by homological classes homeomorphic
to $n$-dimensional spheres, see \cite[\S 15.2]{ho13}. 
These are Lefschetz' famous vanishing cycles, which are obtained by the shifts of Pham polyhedron by deck translations, see \cite{Pham1965}. On the other hand, we can also choose a basis of the image of the restriction map $ F^{\frac{n}{2}+1}H^n_\dR(X)\to H^n_\dR(\Ua)$ (where $F^{\frac{n}{2}+1}$ is the 
$(\frac{n}{2}+1)$-th piece of the Hodge filtration of $H^n_\dR(X)$). In fact, take 
\begin{equation}
\label{dobareparishan-04/2016}
\omega_{\beta}:=x_1^{\beta_1}x_2^{\beta_2}\cdots x_{n+1}^{\beta_{n+1}}  \left(\sum_{i=1}^{n+1}
(-1)^{i-1} x_i dx_1\wedge dx_2\wedge\cdots\wedge \widehat {dx_i}\wedge \cdots \wedge dx_{n+1}\right), \beta\in I_1, \ 
\end{equation}
where
\begin{equation}
\label{indices_I1}
I_1:=\left\{ (\beta_1,\cdots,\beta_{n+1})\in\Z^{n+1} \Bigg|   0\leq \beta_i\leq d-2,\ \ 
\sum_{i=1}^{n+1}\frac{\beta_i+1}{d}\not \in \Z \hbox{ and } <\frac{n}{2}\right\}.
\end{equation}
Therefore,  the lattice of Hodge cycles is given by
\begin{equation}
\label{15o2017}
\hodge_n(X,\Z):=
\left \{
\delta\in H_n(X,\Z)  \Bigg| \mathlarger{\int}_{\delta}\omega_\beta=0,\ \ \forall\beta\in I_1
\right\}.
\end{equation}
Let $I_2$ be the set of $\beta\in I$ such that 
 \begin{enumerate}
  \item 
 There is an  integer $\beta_0$ satisfying the conditions $0\leq \beta_0\leq d-2$ and $\sum_{i=0}^{n+1}\frac{\beta_i+1}{d}:=\frac{n}{2}+1$.
 \item 
 There is no permutation $\sigma$ of $0,1,\ldots,n+1$ without fixed point and with $\sigma^2$ being the
identity such that $\beta_i+\beta_{\sigma(i)}=d-2$ for all $i$.
 \end{enumerate}
 The space of linear Hodge cycles is defined in the following way:
 \begin{equation}
\label{27j2016}
V^d_n:=
\left \{
\delta\in H_n(X,\Z)  \Bigg| \mathlarger{\int}_{\delta}\omega_\beta=0,\ \ \forall\beta\in I_1\cup I_2
\right\}.
\end{equation}
We know that the topological classes of the primitive linear algebraic cycles $\check{P}_i$ are in $(V^d_n)_\prim$ (the subindex $0$ was introduced at the beginning of $\S 3$) 
and over rational numbers they generate it, see for instance \cite[Theorem A]{Aoki1983}, (the first statement also follows from the integration
formula in the main theorem of \cite{Roberto}). Moreover, if $d$ is prime or $d=4$ or $d$ is coprime with $(n+1)!$ then $I_2$ is the empty
set and so $\hodge_n(X,\Z)=V^d_n$. 
One can easily compute the integrals  $\int_{\delta_\beta}\omega_{\beta'},\ \beta\in I_1\cup I_2$ explicitly, see \cite[Lemma 7.12]{DMOS} or  \cite[Proposition 15.1]{ho13}, 
and it turns out that 
$\delta:=\sum_{\beta\in I} {c_\beta}\delta_\beta\in (V^d_n)_\prim,\ \ c_\beta\in \Z$  if and only if the coefficient matrix 
${\tt C}=[c_\beta]_{1\times \mu}$ satisfies 
${\tt C}\cdot Q =0$, where $Q$ is  the $\mu\times \check\mu$ matrix given by:
\begin{equation}
Q:= \left[ \prod_{i=1}^{n+1}\left( 
  \zeta_d^{(\beta_i+1)(\beta_i'+1)}-\zeta_d^{\beta_i(\beta_i'+1)}
  \right)\right]_{\beta\in I ,\beta'\in I_1\cup I_2}
 \end{equation}
$\zeta_d:=e^{\frac{2\pi i}{d}}$ is the $d$-th primitive root of unity and $\check\mu:=\# (I_1\cup I_2)$.  
Since the minimal polynomial of $\zeta_d$
is of degree $\varphi(d)$ (Euler's totient function) we write $Q=\sum_{i=0}^{\varphi(d)-1} Q_i \zeta_d^i$ 
and define 
\begin{equation}
 \label{20.10.2017}
 {\tt A}_2:=[Q_0|Q_1|\cdots| Q_{\varphi(d)-1}]
\end{equation}
which is a $\mu\times (\check\mu\cdot  \varphi(d))$ matrix obtained by concatenation of $Q_i$'s.
Now, the coefficients matrix ${\tt C}$ of a  primitive Hodge cycle satisfies $\tt C\cdot {\tt A}_2=0$. 
We write ${\tt A}_2$ in the Smith normal form. 
Let $\tt Y$ be the $(\mu-{ m})\times \mu$ matrix which is obtained
by joining the $(\mu-{m})\times {m}$ zero matrix with the $(\mu- m)\times (\mu-m)$ identity matrix in a row.
The rows of 
\begin{equation}
\label{23june2016}
{\tt X}:=\tt Y\cdot   {\tt U}_2
\end{equation}
form a basis of the integer-valued $1\times \mu $ matrices $\tt C$ such that  $\tt C\cdot {\tt A}_2=0$. The intersection numbers of topological cycles $\delta_\beta$
are given by the  the following rules:
\begin{eqnarray}
\nonumber
\langle \delta_{\beta},\delta_{\beta'}\rangle&=&(-1)^{n}\langle \delta_{\beta'},\delta_{\beta}\rangle,\ \ \ 
 \forall \beta,\beta'\in I,\\ \label{27.1.2017}
\langle \delta_{\beta},\delta_{\beta}\rangle &=& (-1)^{\frac{n(n-1)}{2}}(1+(-1)^{n}),\ \ \ \forall\beta\in I \\ \nonumber
\langle \delta_{\beta},\delta_{\beta'}\rangle &=& 
(-1)^{\frac{n(n+1)}{2}}
(-1)^{\Sigma_{k=1}^{n+1} \beta_k'-\beta_k}
\end{eqnarray}
for those $\beta,\beta'\in I$  such that for all   $k=1,2,\ldots,n+1$ we have 
$\beta_k\leq \beta_k'\leq \beta_k+1$ and $\beta\neq \beta'$.
In the remaining cases, except those arising from the previous ones by a permutation, we have
$\langle \delta_{\beta},\delta_{\beta'}\rangle=0$, see \cite[Proposition 7.7]{ho13}. 
This computation of intersection number of $\delta_\beta$'s is essentially due to  F. Pham,  see \cite{Pham1965} and \cite[Page 66]{arn}. 
Let $\Psi:=[\langle \delta_{\beta},\delta_{\beta'}\rangle]$
be the $\mu\times\mu$ intersection matrix.  The intersection matrix for primitive Hodge cycles is:
 $$
 {\tt A}_3:={\tt X}\cdot \Psi \cdot {\tt X}^{\tr}.
 $$ 

\subsection{Proof of Theorem \ref{11oct2017}}
The proof is reduced to computation of elementary divisors of ${\tt A}_1$ and ${\tt A}_3$ (counting with repetitions) and observing that they are 
the same. In other words, one has to check if the output of the algorithms (\ref{LinCycAlg}) and (\ref{PrimCycAlg}) are the same. Such elementary divisors might be obtained from those listed in Table \ref{elementary-divs} in the following way: decrease  the power of $1$ by two  and add   $d$ to the list of elementary divisors.
This proves that the sublattice of $H_n(X,\Z)_\prim$ generated by \eqref{15sept2017} coincides with $(V^d_n)_\prim$ (the lattice of primitive 
Hodge cycles).
Since $H_n(X,\Z)=H_n(X,\Z)_\prim+\Z \cdot P_1$ we get Theorem \ref{11oct2017}. We had to compute 
elementary divisors of $V_n^d$ in  Table \ref{elementary-divs} separately, as we did not find any straightforward argument 
relating this  with the elementary divisors of its primitive part. For the sake of completeness, we also computed the sign of each discriminant  in Table \ref{elementary-divs}. This is   $\det(\tt U_1)\cdot\det(\tt T_1)$ for the Smith normal form of ${\tt A}_1$ (one could also use ${\tt A}_3$ for this purpose). The direct computation of the determinant in general does not work, and we had to compute the Smith normal form of ${\tt U}_1, {\tt T}_1$ and count the number of $-1$ in the corresponding two matrices $\tt S$.    

\begin{algorithm}[ht!]
\label{LinCycAlg}
\KwData{Pair $(n,d)$, where $n$ is an even positive integer and $d$ a natural number, representing the dimension and the degree, respectively, of a Fermat variety $X^d_n \subseteq \licy{n+1}$ .}
\KwResult{Elementary divisors of the intersection matrix of linear algebraic cycles of the Fermat variety $X^d_n \subseteq \licy{n+1}$.}
	\Begin{
		$\mathcal{I} \leftarrow $ Family of ideals of the subvarieties $\licy{\frac{n}{2}}_{a,b}$ described in (\ref{ref2})\;
		\For{ $(\mathfrak{a}, \mathfrak{b}) \in \mathcal{I} \times \mathcal{I}$ }{
			$\mathfrak{c} \leftarrow$ Ideal generated by $\mathfrak{a}$ and $\mathfrak{b}$\;
			$m \leftarrow \dim\left ( \mathfrak{c} \right )$\;
			$M_{\mathfrak{a}, \mathfrak{b}} \leftarrow  \dfrac{1 - (-d + 1)^{m+1}}{d}$\; 
		}
		$L \leftarrow $ List of elementary divisors obtained from the Smith decomposition of $M$\;
		\Return $L$\;

	}
\caption{Computation of elementary divisors of the intersection matrix of linear algebraic cycles for a Fermat variety $X^d_n \subseteq \licy{n+1}$.}
\end{algorithm}

\begin{algorithm}[H]
\label{PrimCycAlg}
\KwData{Pair $(n,d)$, where $n$ is an even positive integer and $d$ a natural number, representing the dimension and the degree, respectively, of a Fermat variety $X^d_n \subseteq \licy{n+1}$ .}
\KwResult{Elementary divisors of the intersection matrix of primitive Hodge cycles of the Fermat variety $X^d_n \subseteq \licy{n+1}$.}
	\Begin{
		$I, I_1, I_2 \leftarrow$ index sets as defined in subsection \ref{primitive_Hodge_cycles}\;
		$Q \leftarrow \left[ \prod_{i=1}^{n+1}\left( \zeta_d^{(\beta_i+1)(\beta_i'+1)}-\zeta_d^{\beta_i(\beta_i'+1)} \right)\right]_{\beta\in I ,\beta'\in I_1\cup I_2}$\;
		${\tt A_2} \leftarrow [Q_0|Q_1|\cdots| Q_{\varphi(d)-1}] $ where $Q_i$ satisfy $Q = \sum_{i= 0}^{\varphi(d)-1}Q_i \zeta_d^i$\;
		${\tt U_2} \leftarrow $ matrix arising in Smith normal form of ${\tt A_2}$ as in (\ref{13oct2017})\;
		${\tt X} \leftarrow $ defined as in (\ref{23june2016})\;
		$\Psi \leftarrow \left [ \langle \delta_{\beta},\delta_{\beta'}\rangle \right]$, see (\ref{27.1.2017})\;
		${\tt A_3} \leftarrow {\tt X}\cdot \Psi \cdot {\tt X}^{\tr}$\;
		$L \leftarrow $ List of elementary divisors obtained from the Smith decomposition of ${\tt A_3}$\;
		\Return $L$\;

	}
\caption{Computation of elementary divisors of the intersection matrix of primitive Hodge cycles for a Fermat variety $X^d_n \subseteq \licy{n+1}$.}
\end{algorithm}
 \section{Our computational experience}
\label{16oct2017swap}
The matrix ${\tt A_1}$ in \eqref{11.10.2017} is implemented in {\tt Matlab}, ${\tt A}_2$ in {\tt Singular},  and ${\tt A_3}$ in {\tt Mathematica}. 
All the Smith normal forms are performed in {\tt Mathematica}. 
For all data produced in our computations see the \href{http://w3.impa.br/~aljovin/hodge_project.html}{first author's web page.}
We started to prepare Table \ref{elementary-divs} with a computer with processor {\tt  Intel  Core i7-7700}, $16$ GB Memory plus $16$ GB swap memory and the operating system {\tt Ubuntu 16.04}. 
It turned out that for the case $(n,d)=(4,6)$ we get the  `Memory Full' error. This was during the computation of the Smith normal form of the matrix ${\tt A}_1$. Therefore,  we had to increase the swap memory up to $170$ GB. Despite the low speed of the swap which slowed down the computation, the computer was able to use the data and give us the desired output. For this reason the computation in this case took more than two days. We only know that at least $56$ GB of the swap were used. We were
able to compute the elementary divisors of the lattice of primitive Hodge cycles in Table \ref{ed}  but were not able to compute the elementary divisors of the lattice of linear algebraic cycles.
\begin{table}[ht!]
\centering
\begin{tabular}{ll}
\hline
\multicolumn{1}{ | l | }{$(n, d)$} & \multicolumn{1}{l | }{Elementary divisors}                                    \\ \hline 
\multicolumn{1}{ | l | }{$(6,3)$}  & \multicolumn{1}{l | }{$+1^{54} \cdot 3^{8} \cdot 9^{7} \cdot 27^{1}$}                                                                                                                                                                              \\ \hline 
\multicolumn{1}{ | l | }{$(6,4)$}  & \multicolumn{1}{l | }{$-1^{576} \cdot 2^{68} \cdot 4^{49} \cdot 8^{296} \cdot 16^{48} \cdot 32^{69} \cdot 512^{2}$}                                                                                                                             \\ \hline 
\multicolumn{1}{ | l | }{$(8,3)$}  & \multicolumn{1}{l | }{$-1^{172} \cdot 3^{35} \cdot 9^{34} \cdot 27^{11}$}                                                                                                                                                                          \\ \hline 
\multicolumn{1}{ | l | }{$(10,3)$} & \multicolumn{1}{l | }{$+1^{559} \cdot 3^{144} \cdot 9^{144} \cdot 27^{76} \cdot 81^{1} $}                                                                                                                                                          \\ \hline 
\end{tabular}
\caption{Elementary divisors of the lattice of primitive Hodge cycles of $X^d_n$}
\label{ed}
\end{table}

For the case $(n,d)=(10,3)$, $36$ GB of swap were used and the number of linear algebraic cycles is $N=7577955$ and we were even not able to produce the 
 matrix ${\tt A_1}$ which is the intersection 
 matrix of the lattice of linear algebraic cycles.

 \def\cprime{$'$} \def\cprime{$'$} \def\cprime{$'$} \def\cprime{$'$}


\end{document}